\newtheorem{theorem}{Theorem}[section]
\newtheorem{proposition}[theorem]{Proposition}
\newtheorem{corollary}[theorem]{Corollary}
\newtheorem{definition}[theorem]{Definition}
\newtheorem{example}[theorem]{Example}
\newtheorem{lemma}[theorem]{Lemma}
\newtheorem{remark}[theorem]{Remark}
\newtheorem{question}[theorem]{Question}
\newcommand{\esd}{{\rm esd}}
\newcommand{\Hur}{\mathbf{Hur}}
\newcommand{\Lace}{\mathbf{Lace}}
\newcommand{\Toep}{\mathbf{Toep}}
\newcommand{\aA}{{\mathcal A}}
\newcommand{\bB}{{\mathcal B}}
\newcommand{\cC}{{\mathcal C}}
\newcommand{\fF}{{\mathcal F}}
\newcommand{\gG}{{\mathcal G}}
\newcommand{\sS}{{\mathcal S}}
\newcommand{\ba}{{\mathbf a}}
\newcommand{\balpha}{\boldsymbol{\alpha}}
\newcommand{\bM}{{\mathbf M}}
\newcommand{\bN}{{\mathbf N}}
\newcommand{\RR}{{\mathbb R}}
\newcommand{\NN}{{\mathbb N}}
\newcommand{\ZZ}{{\mathbb Z}}
\newcommand{\Rr}{\mathsf{R}}
\newcommand{\Ss}{\mathsf{S}}
\renewcommand{\i}{\mathrm{i}}
\begin{document}

\title[Veronese sections and interlacing matrices]
{Veronese sections and interlacing matrices of 
polynomials and formal power series}

\author{Christos~A.~Athanasiadis}
\address{Department of Mathematics\\
National and Kapodistrian University of Athens\\
Panepistimioupolis\\ 15784 Athens, Greece}
\email{caath@math.uoa.gr}

\author{David G. Wagner}
\address{Department of Combinatorics and 
Optimization\\ University of Waterloo\\
Waterloo, Ontario N2L 3G1\\ Canada}
\email{dgwagner@math.uwaterloo.ca}

\date{April 20, 2024}
\thanks{First author supported by the Hellenic 
Foundation for Research and Innovation (H.F.R.I.) 
under the `2nd Call for H.F.R.I. Research Projects 
to support Faculty Members \& Researchers' 
(Project Number: HFRI-FM20-04537).}
\thanks{ \textit{Mathematics Subject 
Classifications}: Primary: 15B48; 
                  Secondary: 15A30, 15B05, 26C10.}
\thanks{\textit{Key words and phrases}:
Real-rooted polynomial, Toeplitz matrix, P\'olya 
frequency sequence, interlacing relation, Veronese 
construction.}

\begin{abstract}
The concept of a fully interlacing matrix of formal 
power series with real coefficients is introduced.
This concept extends and strengthens that of an 
interlacing sequence of real-rooted polynomials
with nonnegative coefficients, in the special case 
of row and column matrices. The fully interlacing 
property is shown to be preserved under matrix 
products, flips across the reverse diagonal and    
Veronese sections of the power series involved. 
These results and their corollaries generalize, 
unify and simplify several results which have 
previously appeared in the literature. An 
application to the theory of uniform triangulations
of simplicial complexes is included.
\end{abstract}

\maketitle

\section{Introduction}
\label{sec:intro}

Real-rooted polynomials, especially those with 
nonnegative coefficients, have been studied intensely 
within various mathematical disciplines, including 
algebraic, enumerative and geometric combinatorics
\cite{Bra15, Bre89, Bre94, Sta89}. 
Two important players in this study are the theory of 
interlacing of polynomials \cite[Section~8]{Bra15} 
\cite{Fi06} and the Veronese construction for formal 
power series \cite{BS10, BW09, Jo18, Jo22, Zh20}. 
The present work arose from an effort to better 
understand the connections between these two 
notions.

To explain the content and motivation for this paper,  
we begin with a few definitions. A polynomial $P(x) 
\in \RR[x]$ is said to be \emph{real-rooted} if either it
is the zero polynomial, or every complex root of $P(x)$
is real. Given two polynomials $P(x), Q(x) \in \RR[x]$, 
we say that $P(x)$ \emph{interlaces} $Q(x)$, and write 
$P(x) \prec Q(x)$, if (a) both $P(x)$ and $Q(x)$ are 
real-rooted and have nonnegative coefficients; and (b)
the roots $\{\eta_i\}$ of $P(x)$ interlace (or alternate 
to the left of) the roots $\{\theta_j\}$ of $Q(x)$, in 
the sense that they can be listed as
\[ \cdots \le \theta_3 \le \eta_2 \le \theta_2 \le 
   \eta_1 \le \theta_1 \le 0. \]
By convention, the zero polynomial interlaces and 
is interlaced by every real-rooted polynomial with 
nonnegative coefficients. 

Given a formal power series $A(x) = \sum_{n \ge 0} 
a_n x^n \in \RR[[x]]$ and integers $0 \le k < r$, 
the \emph{$k$th Veronese $r$-section} (or simply 
\emph{$k$th $r$-section}) of $A(x)$ is defined as 
the formal power series
\begin{equation} \label{eq:def-Veronese}
\Ss_k^{(r)}A(x) = \sum_{n \ge 0} a_{k+rn} x^n. 
\end{equation}
We will often suppress the 
superscript $(r)$ from the notation. Every 
$r$-section of a real-rooted polynomial with 
nonnegative coefficients is real-rooted (see, for 
instance, \cite{AESW} \cite[Theorem~7.65]{Fi06} 
\cite[V~171.4]{PS72}). The question whether the 
$r$-section operators preserve interlacing 
arises naturally: Given that $P(x)$ interlaces $Q(x)$, 
does the $k$th $r$-section of $P(x)$ necessarily 
interlace that of $Q(x)$ for all $r, k$? The aim of
this paper is to give an affirmative answer to this 
question (see Corollary~\ref{cor:main}), extend and 
strengthen the corresponding result in several 
directions (see Section~\ref{sec:veronese}) and show 
that a natural context to study such questions is 
provided by the class of AESW series (equivalently, 
that of P\'olya frequency sequences); see 
Section~\ref{sec:AESW}. The latter can be viewed as 
the closure of the set of real-rooted polynomials 
with nonnegative coefficients and is characterized 
by the fundamental notion of total positivity.

The main new construction introduced is that of 
an interlacing matrix of formal power series (see 
Definition~\ref{def:lace}). The new concept of 
interlacing, termed as full interlacing, is defined 
by the total positivity of that matrix. The special 
cases of row and column matrices strengthen and 
extend to power series the well known concept of 
an interlacing sequence of real-rooted polynomials 
(both concepts reduce to the interlacing relation 
we have already discussed for rows and columns of 
length two). Our main results state that the conept 
of full interlacing behaves well with respect to 
taking submatrices, matrix products, flips across 
the reverse diagonal and Veronese sections of the 
entries of the matrix (see 
Theorems~\ref{thm:flip},~\ref{thm:grid} 
and~\ref{thm:veronese}). They simplify, generalize 
and unify several results which have previously 
appeared in the literature.

Our motivation comes from the study of certain 
operators which are of interest in geometric 
combinatorics~\cite{Ath22} (we describe an 
application in Section~\ref{sec:app} and expect 
that our results will find more applications there), 
as well as from the need to develop more tools for 
proving the interlacing of real-rooted polynomials. 
An extension of the ideas of this paper to total 
positivity up to a certain order or level 
\cite{RSW06} and to tensors of higher rank is 
left to the future.

\section{Three old theorems}
\label{sec:three}

This section reviews three classical theorems on 
stability and total positivity, which will serve 
as the main ingredients for our results, and extends
the notions of interlacing and Hurwitz stability 
from polynomials to formal power series. 

Here, 
$\ba = (a_n)_{n \in \NN} = (a_0, a_1, a_2,\dots)$ 
will be a sequence of real numbers and $A = A(x) = 
\sum_{n \ge 0} a_n x^n$ will be its generating 
function. We set $a_n = 0$ for all negative 
integers $n$. We denote by $\aA^\top$ the 
transpose of a matrix $\aA$. 

\subsection{The Aissen--Edrei--Schoenberg--Whitney 
(AESW) Theorem} 
\label{sec:AESW}

The \emph{Toeplitz matrix} of $A$ is defined as 
$\Toep[A] = (a_{v-u})$; it is indexed by $\ZZ \times 
\ZZ$, with matrix indexing conventions, where $u$ 
indexes rows and $v$ indexes columns. Here is a 
small piece:
\[ \Toep[A] = \begin{pmatrix} 
   \cdot & \cdot & \cdot & \cdot & \cdot & \cdot \\
   \cdot & a_0 & a_1 & a_2 & a_3 & \cdot \\
   \cdot &   0 & a_0 & a_1 & a_2 & \cdot \\
   \cdot &   0 &   0 & a_0 & a_1 & \cdot \\
   \cdot &   0 &   0 &   0 & a_0 & \cdot \\
   \cdot & \cdot & \cdot & \cdot & \cdot & \cdot 
	\end{pmatrix} \]
The sequence $\ba$ is said to be a \emph{P\'olya 
frequency sequence} if $\Toep[A]$ is \emph{totally 
positive} (TP), meaning that every finite square 
submatrix has nonnegative determinant. The 
Aissen--Edrei--Schoenberg--Whitney Theorem 
\cite{AESW,ASW,E1,E2} classifies the power series 
$A(x) \in \RR[[x]]$ for which this happens. They 
are exactly the series of the form \\

\mbox{} \hfill $\displaystyle
A(x) = c x^n \exp(\gamma x) \, 
\frac{\prod_i(1+\alpha_i x)}{\prod_j(1-\beta_j x)}$
\hfill (AESW) \\ 

\noindent
for $n \in \NN$ and nonnegative real numbers $c, 
\alpha_i, \beta_j, \gamma$ such that $\sum_i 
\alpha_i$ and $\sum_j \beta_j$ are finite (we refer
to them as AESW series, in this paper). They are 
also the series in $\RR[[x]]$ that are limits 
(uniformly on compact sets) of polynomials with 
nonnegative coefficients and only real roots. 
We note that a polynomial is AESW if and only if 
it has nonnegative coefficients and only real 
roots and that a series $A(x) \in \RR[[x]]$ with 
constant term $A(0) \ne 0$ is AESW if and only 
if $A(-x)^{-1}$ is AESW.

\subsection{The Hurwitz stability criterion}
The \emph{Hurwitz matrix} of $A$ is also indexed by 
$\ZZ\times\ZZ$ and defined as $\Hur[A] = (a_{2v-u})$.
Here is a small piece:
\[ \Hur[A] = \begin{pmatrix} 
   \cdot & \cdot & \cdot & \cdot & \cdot & \cdot \\
   \cdot & a_0 & a_2 & a_4 & a_6 & \cdot \\
   \cdot & 0   & a_1 & a_3 & a_5 & \cdot \\
   \cdot & 0   & a_0 & a_2 & a_4 & \cdot \\
   \cdot & 0   & 0   & a_1 & a_3 & \cdot \\
   \cdot & 0   & 0   & a_0 & a_2 & \cdot \\
   \cdot & \cdot & \cdot & \cdot & \cdot & \cdot 
    \end{pmatrix} \]

A polynomial $R(x) \in \RR[x]$ is called 
\emph{Hurwitz} (or \emph{Hurwitz stable}) if it has 
nonnegative coefficients and every root of $R(x)$ has 
nonpositive real part. The Hurwitz stability criterion 
\cite{Hu} (see also \cite{As70, Ke82}) asserts that 
this happens if and only if $\Hur[R]$ is TP.

More generally, we say that the power series $C(x) \in 
\RR[[x]]$ is \emph{Hurwitz} precisely when $\Hur[C]$ 
is TP. Notice that the Hurwitz matrix $\Hur[C]$ consists 
of two Toeplitz matrices which interleave each other,
namely $\Toep[A]$ and $\Toep[B]$, where $C(x) = 
B(x^2) + xA(x^2)$ (thus, $A(x)$ and $B(x)$ are the odd 
and even parts of $C(x)$, respectively). As a 
consequence, if a power series $C(x) = B(x^2) + x 
A(x^2) \in \RR[[x]]$ is Hurwitz, then $A(x)$ and $B(x)$ 
are both AESW. In particular, if a polynomial $R(x) = 
Q(x^2) + x P(x^2) \in \RR[x]$ is Hurwitz, then both 
$P(x)$ and $Q(x)$ have nonnegative coefficients and 
only real roots. 

\subsection{The Hermite--Biehler Theorem}
\label{sec:HBT}

Recall from the introduction that a polynomial $P(x) 
\in \RR[x]$ is said to \emph{interlace} a polynomial 
$Q(x) \in \RR[x]$, written $P(x) \prec Q(x)$, if both 
$P(x)$ and $Q(x)$ are AESW and the roots of $P(x)$ 
interlace those of $Q(x)$. The Hermite--Biehler 
Theorem \cite{Bi79, He56} (see also \cite{Ga60}) 
implies that a polynomial $R(x) = Q(x^2) + x P(x^2) 
\in \RR[x]$ is Hurwitz if and only if $P(x) \prec 
Q(x)$. The same theorem implies (see 
\cite[Theorem~6.3.4]{RS02} \cite[p.~57]{Wa11}) that 
for AESW polynomials $P(x)$ and $Q(x)$, this 
property is equivalent to the complex polynomial 
$Q(z) + \i P(z)$ being (univariate) \emph{stable}, 
meaning that all its roots have nonpositive 
imaginary part.

We may again generalize the concept of interlacing
from polynomials to series as follows. Given
power series $A(x), B(x) \in \RR[[x]]$ with $A(x) = 
\sum_{n \ge 0} a_n x^n$ and $B(x) = \sum_{n \ge 0} 
b_n x^n$, we introduce the \emph{interlacing matrix}
\[ \Lace \begin{pmatrix} A(x) \\ B(x) \end{pmatrix} 
   := \Hur[B(x^2)+xA(x^2)] = \begin{pmatrix}
\cdot & \cdot & \cdot & \cdot & \cdot & \cdot \\
\cdot & b_0 & b_1 & b_2 & b_3 & \cdot \\
\cdot & 0   & a_0 & a_1 & a_2 & \cdot \\
\cdot & 0   & b_0 & b_1 & b_2 & \cdot \\
\cdot & 0   & 0   & a_0 & a_1 & \cdot \\
\cdot & 0   & 0   & b_0 & b_1 & \cdot \\
\cdot & \cdot & \cdot & \cdot & \cdot & \cdot 
   \end{pmatrix} \]
and say that $A(x)$ \emph{interlaces} $B(x)$, 
denoted $A(x) \prec B(x)$, if 
$\Lace(A(x) \, B(x))^\top$ is TP. This implies 
that both $A(x)$ and $B(x)$ are AESW. For 
polynomials, this agrees with the notion of 
interlacing we have already discussed. 

\section{Fully interlacing matrices}
\label{sec:main}

This section introduces the concept of a fully 
interlacing matrix of formal power series. We 
begin with the important special case of 
interlacing sequences. It will be convenient 
for us to think of sequences of formal power 
series as column vectors. 

\subsection{Fully interlacing sequences}
\label{sec:int-seq}

Let $\aA = (A_0(x) \, A_1(x) \, \cdots \, 
A_{p-1}(x))^\top$ be a sequence of formal power 
series in $\RR[[x]]$ (this indexing convention will 
be convenient for us later). We call $\aA$ 
\emph{pairwise interlacing} if $A_i(x) \prec A_j(x)$ 
for all $0 \le i < j \le p-1$.

Extending the definition of the interlacing matrix of
two power series from Section~\ref{sec:HBT}, we set
\[ \Lace \begin{pmatrix} A(x) \\ B(x) \\ C(x) 
   \end{pmatrix} =
   \begin{pmatrix}
\cdot & \cdot & \cdot & \cdot & \cdot & \cdot & 
        \cdot \\
\cdot & c_0 & c_1 & c_2 & c_3 & c_4 & \cdot \\
\cdot & 0   & a_0 & a_1 & a_2 & a_3 & \cdot \\
\cdot & 0   & b_0 & b_1 & b_2 & b_3 & \cdot \\
\cdot & 0   & c_0 & c_1 & c_2 & c_3 & \cdot \\
\cdot & 0   & 0   & a_0 & a_1 & a_2 & \cdot \\
\cdot & \cdot & \cdot & \cdot & \cdot & \cdot & 
        \cdot \end{pmatrix} \]
for $A(x) = \sum_{n \ge 0} a_n x^n$, $B(x) = 
\sum_{n \ge 0} b_n x^n$, $C(x) = \sum_{n \ge 0} 
c_n x^n \in \RR[[x]]$ and similarly for 
$\Lace(\aA)$ for every sequence $\aA = (A_0(x) 
\, A_1(x) \, \cdots \, A_{p-1}(x))^\top$ of power 
series in $\RR[[x]]$. We call $\aA$ \emph{fully 
interlacing} if the matrix $\Lace(\aA)$ is TP. 
Our next two statements follow directly from 
the definition.
\begin{lemma} [Shift-invariance] \label{lem:shift}
Suppose that $(A_0(x) \, A_1(x) \, \cdots \, 
A_{p-1}(x))^\top$ is a fully interlacing sequence
of formal power series in $\RR[[x]]$. Then, so is 
$(A_1(x) \, \cdots \, A_{p-1}(x) \ xA_0(x))^\top$.
\end{lemma}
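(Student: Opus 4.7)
The plan is to show, directly from the layout of the $\Lace$ matrix, that $\Lace(\bg)$ is merely a row-reindexing of $\Lace(\aA)$, where $\bg = (A_1(x) \, \cdots \, A_{p-1}(x) \, xA_0(x))^\top$. Since every finite square submatrix of $\Lace(\bg)$ then coincides with a finite square submatrix of $\Lace(\aA)$ (after relabeling rows, which does not affect the determinant), the total positivity hypothesis on $\Lace(\aA)$ transfers immediately to $\Lace(\bg)$.

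First I would extract the explicit formula for a general entry of $\Lace$ suggested by the $p = 2$ and $p = 3$ examples. Writing $u-1 = qp + r$ with $q \in \ZZ$ and $0 \le r \le p-1$, the pattern in those examples gives
\[
\Lace(\aA)_{u,v} \;=\; [x^{v-q-1}] \, A_r(x),
\]
where $[x^n]F(x)$ denotes the coefficient of $x^n$ in $F(x)$, taken to be $0$ for $n<0$. I would check this is the natural extrapolation to arbitrary $p$ (and in particular that it reduces to the two displayed matrices).

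Next I apply the same description to $\bg = (B_0(x) \, \cdots \, B_{p-1}(x))^\top$, where $B_i(x) = A_{i+1}(x)$ for $0 \le i \le p-2$ and $B_{p-1}(x) = x A_0(x)$, and split into two cases. If $0 \le r \le p-2$, then $\Lace(\bg)_{u,v} = [x^{v-q-1}] A_{r+1}(x)$, which equals $\Lace(\aA)_{u+1,v}$ because $u+1-1 = qp + (r+1)$ with $0 \le r+1 \le p-1$. If $r = p-1$, then $\Lace(\bg)_{u,v} = [x^{v-q-1}] (xA_0)(x) = [x^{v-q-2}] A_0(x)$, and since $u+1-1 = (q+1)p + 0$ we also have $\Lace(\aA)_{u+1,v} = [x^{v-q-2}] A_0(x)$. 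The key point of the argument lives in this second case: the multiplication by $x$ on the last entry of the tuple exactly compensates for the rollover in the quotient $q \mapsto q+1$ when the remainder drops from $p-1$ back to $0$.

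Thus in every case $\Lace(\bg)_{u,v} = \Lace(\aA)_{u+1,v}$. Since $u \mapsto u+1$ is a bijection of $\ZZ$, every finite minor of $\Lace(\bg)$ agrees with a finite minor of $\Lace(\aA)$, so the TP property of $\Lace(\aA)$ forces the TP property of $\Lace(\bg)$, as required. I do not expect a real obstacle: the proof is a routine bookkeeping verification that the $\Lace$ construction intertwines the cyclic shift of the tuple (twisted by the factor $x$ at the last position) with the ordinary row shift of a $\ZZ\times\ZZ$-indexed matrix.
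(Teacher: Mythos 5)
Your proof is correct and is essentially the paper's own argument: the paper gives no separate proof, asserting that the lemma ``follows directly from the definition,'' and the intended direct verification is exactly your observation that $\Lace$ of the shifted sequence $(A_1(x) \, \cdots \, A_{p-1}(x) \ xA_0(x))^\top$ coincides with $\Lace(\aA)$ after increasing every row index by one (the factor $x$ compensating for the rollover of the quotient), so every finite minor of the new matrix is a minor of the old one and total positivity transfers. Your extracted entry formula agrees with the general formula in Definition~\ref{def:lace} up to a harmless simultaneous shift of the row and column origins, so the bookkeeping is sound.
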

\begin{proposition}[Heredity]
Every subsequence of a fully interlacing sequence 
of formal power series in $\RR[[x]]$ is fully 
interlacing. In particular, every fully 
interlacing sequence is pairwise interlacing.
\end{proposition}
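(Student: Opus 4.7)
The plan is to realize $\Lace(\aA')$ as an infinite row-submatrix of $\Lace(\aA)$, so that the total positivity of the latter transfers immediately to the former. The ``in particular'' clause then follows by taking $\aA' = (A_i(x)\ A_j(x))^\top$ for $i < j$: full interlacing of this length-two subsequence is, by definition, the relation $A_i(x) \prec A_j(x)$.

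The first step is to unpack the row structure of $\Lace(\aA)$. Each row is supported on the coefficients of a single series from $\aA$: as the row index varies, the series used cycles through $A_{p-1}, A_0, A_1, \ldots, A_{p-2}$ with period $p$, and within the arithmetic progression of rows devoted to a fixed $A_i$ the horizontal shift (the number of leading zeros preceding $a_{i,0}$) increases by $1$ at each step. The identical description applies to $\Lace(\aA')$, with period $q := |\aA'|$ and the cycle restricted to $\aA'$.

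Given $\aA' = (A_{i_1}\ A_{i_2}\ \cdots\ A_{i_q})^\top$, I would define an injection $\phi : \ZZ \to \ZZ$ that sends the row of $\Lace(\aA')$ using $A_{i_j}$ with shift $k$ to the row of $\Lace(\aA)$ using the same series with the same shift. A row-by-row comparison, starting from the defining formula $\Lace(\cdot)_{u,v} = c_{pv-u}$ with $c(x) := \sum_{s=0}^{p-1} A_{p-1-s}(x^p) x^s$, then yields the entrywise identity $\Lace(\aA')_{u,v} = \Lace(\aA)_{\phi(u),v}$. Consequently, every finite square submatrix of $\Lace(\aA')$ indexed by rows $U$ and columns $V$ coincides with the submatrix of $\Lace(\aA)$ indexed by rows $\phi(U)$ and columns $V$, which has nonnegative determinant by hypothesis. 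Hence $\Lace(\aA')$ is TP, i.e.\ $\aA'$ is fully interlacing.

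The only substantive step is writing down $\phi$ explicitly and verifying the entrywise identity; both reduce to routine bookkeeping with the cyclic-block structure of the interlacing matrix, consistent with the authors' remark that the proposition ``follows directly from the definition.''
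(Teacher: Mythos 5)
Your argument is correct and is precisely the argument the paper intends when it says the proposition ``follows directly from the definition'' (it is the same submatrix-of-a-TP-matrix observation that underlies Proposition~\ref{lem:mat-heredity}): $\Lace(\aA')$ is the row-submatrix of $\Lace(\aA)$ obtained by keeping, for each member of the subsequence and each shift, the corresponding row, so every minor of $\Lace(\aA')$ is a minor of $\Lace(\aA)$, and the length-two case gives pairwise interlacing by the definition of $\prec$ for series. Two harmless bookkeeping points: your formula $\Lace(\aA)_{u,v}=c_{pv-u}$ with $c(x)=\sum_{s=0}^{p-1}A_{p-1-s}(x^p)x^s$ agrees with Definition~\ref{def:lace} only up to a translation of the row index by $p-1$ (immaterial for total positivity), and the one step worth recording explicitly is that your row map $\phi$ is strictly increasing --- which uses the fact that the subsequence indices are increasing --- so that the identified minors are taken with rows in the same relative order.
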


The next statement is the primary means of producing
AESW series, and especially real-rooted polynomials, 
in many applications. The proof is postponed until 
Example~\ref{ex:grid} in the sequel, since it can be 
deduced from more general results.
\begin{proposition}[Convexity] \label{lma-AESWcone}
If $\aA = (A_0(x) \, A_1(x) \, \cdots \, 
A_{p-1}(x))^\top$ is a fully interlacing sequence 
of formal power series in $\RR[[x]]$, then 
\[ A_0(x) \prec \sum_{i=0}^{p-1} \lambda_i A_i(x) 
          \prec A_{p-1}(x) \]
for all nonnegative real numbers $\lambda_0, 
\lambda_1,\dots,\lambda_{p-1}$. In particular, 
every series in the cone $\RR_+\aA$ is AESW.
\end{proposition}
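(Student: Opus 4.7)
The plan is to introduce $B(x) := \sum_{i=0}^{p-1} \lambda_i A_i(x)$ and the auxiliary length-$p$ sequence $\tilde{\aA} := (A_0(x) \; B(x) \; B(x) \; \cdots \; B(x))^\top$. If $\tilde{\aA}$ can be shown to be fully interlacing, then applying Heredity to the subsequence at positions $1$ and $2$ yields $A_0 \prec B$. The symmetric construction $\hat{\aA} := (B(x) \; B(x) \; \cdots \; B(x) \; A_{p-1}(x))^\top$ handles the other inequality $B \prec A_{p-1}$. The concluding assertion that every element of $\RR_+\aA$ is AESW then follows automatically, since $A \prec C$ forces both $A$ and $C$ to be AESW.

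To show $\tilde{\aA}$ is fully interlacing, I would argue via a matrix identity. The key structural observation is that the (doubly infinite) rows of $\Lace(\aA)$ partition into consecutive blocks of $p$ rows, one block per ``shift level,'' with each block containing exactly one row for each of $A_0, A_1, \ldots, A_{p-1}$ and with all $p$ rows of a single block supported in the same columns. Under this partition, the linear substitution $\tilde{\aA} = P\aA$, where
\[
   P \;=\; \begin{pmatrix} 1 & 0 & 0 & \cdots & 0 \\ \lambda_0 & \lambda_1 & \lambda_2 & \cdots & \lambda_{p-1} \\ \vdots & \vdots & \vdots & & \vdots \\ \lambda_0 & \lambda_1 & \lambda_2 & \cdots & \lambda_{p-1} \end{pmatrix}
\]
(one $e_1^\top$-row followed by $p-1$ copies of $(\lambda_0, \ldots, \lambda_{p-1})$), lifts to the matrix identity $\Lace(\tilde{\aA}) = M \cdot \Lace(\aA)$, where $M$ is the block-diagonal matrix whose blocks are copies of $P$ aligned with the shift-level partition of the rows.

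It then remains to verify total positivity of the product. The matrix $P$ has rank at most two, so every minor of size $\ge 3$ vanishes; its entries are nonnegative, and every $2 \times 2$ minor is either zero (if both rows are drawn from the repeated $\lambda$-rows) or equals some $\lambda_{j-1} \ge 0$ (if one row is the $e_1^\top$-row). Hence $P$ is TP. A block-diagonal matrix with TP blocks is itself TP, because every one of its nontrivial minors factors as a product of block minors. Since each row of $M$ has only finitely many nonzero entries, the Cauchy--Binet formula applies to the doubly infinite product $M \cdot \Lace(\aA)$ without convergence issues, and the product of two TP matrices is TP. The analogous matrix $P'$ handling $B \prec A_{p-1}$, with $p-1$ copies of $(\lambda_0, \ldots, \lambda_{p-1})$ stacked above the row $(0, \ldots, 0, 1)$, is TP by the same reasoning.

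The main potential obstacle is the careful bookkeeping of indices needed to justify the shift-level block structure of $\Lace(\aA)$ and the identity $\Lace(\tilde{\aA}) = M \cdot \Lace(\aA)$ for the doubly infinite matrices; once this structural point is in place, the rest of the proof reduces to routine TP verification.
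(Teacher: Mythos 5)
Your proposal is correct and follows essentially the paper's own route: the paper likewise realizes $A_0$, $\sum_i\lambda_iA_i$ and $A_{p-1}$ as the entries of $P\aA$ for a totally positive scalar matrix $P$ (a single $3\times p$ matrix with rows $e_1^\top$, $(\lambda_0,\dots,\lambda_{p-1})$, $e_p^\top$), then applies the identity $\Lace(P\aA)=\Lace(P)\Lace(\aA)$ of Theorem~\ref{thm:grid} together with heredity. The only differences are cosmetic: you use two square matrices $P$, $P'$ instead of one rectangular one, and you re-derive the needed special case of Theorem~\ref{thm:grid} (block-diagonal lift plus Cauchy--Binet) rather than citing it.
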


\begin{example} \rm
Pairwise interlacing sequences which are not fully 
interlacing can be constructed as follows. Let $a, 
b, c, d, t$ be positive real numbers such that $a 
\le c$ and $b \le d$ and consider the polynomials 
$P(x) = t+x$, $Q(x) = (b+x)(d+x)$ and $R(x) = 
(a+x)(c+x)$. Then, the sequence $(P(x) 
\ Q(x) \ R(x))^\top$ is pairwise interlacing if 
and only if $a \le b \le t \le c \le d$. The 
interlacing matrix 
$\Lace (P(x) \ Q(x) \ R(x))^\top$ has
\[ \det \begin{pmatrix} 
   t & 1 & 0 \\ bd & b+d & 1 \\ ac & a+c & 1 
	 \end{pmatrix} = t(b+d-a-c) \, - \, (bd-ac) \]
as a minor, hence $(P(x) \ Q(x) \ R(x))^\top$ is 
not fully interlacing for $t < (bd-ac)/(b+d-a-c)$. 
When $(a, b, c, d) = (1, 2, 3, 4)$, any value 
$2 \le t < 5/2$ provides such an example.
\end{example}

\subsection{Fully interlacing grids.}

Let $\aA$ be a $p \times q$ matrix of power series.
Then, $\Lace[\aA]$ is obtained by interleaving the 
Toeplitz matrices of the series in $\aA$ in a 
natural way, for instance:
\[ \Lace \begin{pmatrix}
   A(x) & C(x) \\ B(x) & D(x) \end{pmatrix} = 
	 \begin{pmatrix}
   \cdot & \cdot & \cdot & \cdot & \cdot & \cdot & 
	 \cdot \\
\cdot & a_0 & c_0 & a_1 & c_1 & a_2 & \cdot \\
\cdot & b_0 & d_0 & b_1 & d_1 & b_2 & \cdot \\
\cdot &  0  &  0  & a_0 & c_0 & a_1 & \cdot \\
\cdot &  0  &  0  & b_0 & d_0 & b_1 & \cdot \\
\cdot &  0  &  0  &  0  &  0  & a_0 & \cdot \\
\cdot & \cdot & \cdot & \cdot & \cdot & \cdot & 
\cdot \end{pmatrix} \]

Formally, we have the following definition. 

\begin{definition} \label{def:lace} \rm
Let $\aA = (A_{ij}(x))$ be a $p \times q$ matrix of 
power series 
\begin{equation} \label{eq:Aij}
A_{ij}(x) = \sum_{n \ge 0} a_{ij}(n) x^n \in 
            \RR[[x]], 
\end{equation}
with entries indexed by $i \in \{0, 1,\dots,p-1\}$
and $j \in \{0, 1,\dots,q-1\}$. Let $\Lace(\aA) = 
(M_{uv})$ be the $\ZZ \times \ZZ$ matrix defined as 
follows. For $(u,v) \in \ZZ \times \ZZ$, 
let $(u',i)$ and $(v',j)$ be the unique pairs of 
integers such that $u = pu'+i$ and $v = qv'+j$ 
with $0 \le i < p$ and $0 \le j < q$. Then,
\[ M_{uv} = a_{ij}(v'-u') = [x^{v'-u'}]A_{ij}(x) 
   \]
is the coefficient of $x^{v'-u'}$ in $A_{ij}(x)$. 
We call $\aA$ \emph{fully interlacing} if 
$\Lace(\aA)$ is TP.
\end{definition}

\begin{remark} \rm
Let $\balpha_n$ be the $p \times q$ matrix
$(a_{ij}(n))$ for every $n \in \NN$, so that $\aA = 
\sum_{n \ge 0} \balpha_n x^n$. Then, $\Lace(\aA)$ has
the block decomposition
\[ \Lace(\aA) = \begin{pmatrix} 
   \cdot & \cdot & \cdot & \cdot & \cdot & \cdot \\
   \cdot & \balpha_0 & \balpha_1 & \balpha_2 & 
	         \balpha_3 & \cdot \\
   \cdot &   O & \balpha_0 & \balpha_1 & \balpha_2 
	       & \cdot \\
   \cdot &   O &   O & \balpha_0 & \balpha_1 & \cdot \\
   \cdot &   O &   O &   O & \balpha_0 & \cdot \\
   \cdot & \cdot & \cdot & \cdot & \cdot & \cdot 
	\end{pmatrix} \]
in which every block is a $p \times q$ matrix. 
Clearly, this expression specializes to the 
definition of the Toeplitz matrix for $p=q=1$. 
\qed
\end{remark}

\begin{example} \label{ex:def-lace} \rm
Let $A(x), B(x) \in \RR[[x]]$.

(a) A $p \times q$ matrix of constant power series 
is fully interlacing if and only if it is TP.

(b) By the AESW Theorem, the $1 \times 1$ matrix 
$(A(x))$ is fully interlacing if and only if $A(x)$ 
is AESW. 

Similarly, by definition, the $2 \times 1$ matrix 
$(A(x) \ B(x))^\top$ is fully interlacing if and only 
if $A(x) \prec B(x)$. By the Hermite--Biehler Theorem,
this statement holds for polynomials in $A(x), B(x) 
\in \RR[x]$ and the classical notion of interlacing.
\qed
\end{example}

Definition~\ref{def:lace} agrees with our earlier 
notion of fully interlacing sequence when $\aA$ is 
a column matrix and directly implies the following 
statement.
\begin{proposition}[Heredity] 
\label{lem:mat-heredity}
Every submatrix of a fully interlacing matrix of 
formal power series in $\RR[[x]]$ is fully 
interlacing.
\end{proposition}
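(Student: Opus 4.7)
The plan is to realize $\Lace(\aA')$ explicitly as a submatrix (in the sense of row/column index selection) of $\Lace(\aA)$ for any submatrix $\aA'$ of $\aA$, from which total positivity transfers for free, since total positivity is by definition preserved under passing to submatrices.

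Concretely, suppose $\aA'$ is the $p' \times q'$ submatrix of $\aA$ obtained by restricting to rows $I = \{i_0 < i_1 < \cdots < i_{p'-1}\} \subseteq \{0, 1, \ldots, p-1\}$ and columns $J = \{j_0 < j_1 < \cdots < j_{q'-1}\} \subseteq \{0, 1, \ldots, q-1\}$. I would introduce order-preserving injections $\varphi, \psi \colon \ZZ \hookrightarrow \ZZ$ defined by $\varphi(p'u' + k) = pu' + i_k$ for $0 \le k < p'$ and $\psi(q'v' + l) = qv' + j_l$ for $0 \le l < q'$. The key verification, directly from Definition~\ref{def:lace}, is that the $(u,v)$-entry of $\Lace(\aA')$, namely the coefficient $a_{i_k, j_l}(v'-u')$ with $u = p'u'+k$ and $v = q'v'+l$, coincides with the $(\varphi(u), \psi(v))$-entry of $\Lace(\aA)$, since the quotients $u'$ and $v'$ read off from $\varphi(u) = pu'+i_k$ and $\psi(v) = qv'+j_l$ agree with those read off from $u$ and $v$.

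Once this identification is in place, every finite square submatrix of $\Lace(\aA')$ is a finite square submatrix of $\Lace(\aA)$ (sitting on the rows indexed by $\varphi(\ZZ)$ and columns indexed by $\psi(\ZZ)$), and therefore has nonnegative determinant by the TP hypothesis on $\Lace(\aA)$. I do not anticipate any genuine obstacle: the proposition is essentially built into Definition~\ref{def:lace}, and the only content is checking that the block structure of the $\Lace$ construction interacts correctly with the chosen embeddings $\varphi$ and $\psi$.
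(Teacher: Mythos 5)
Your argument is correct and is precisely the one the paper intends: it observes that the proposition ``directly'' follows from Definition~\ref{def:lace}, the implicit content being exactly your identification of $\Lace(\aA')$ with the submatrix of $\Lace(\aA)$ on the rows $pu'+i_k$ and columns $qv'+j_l$, after which total positivity passes to submatrices by definition. Your explicit check that the embeddings $\varphi$ and $\psi$ are order-preserving and compatible with the quotient/remainder indexing is exactly the verification the paper leaves to the reader.
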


\begin{remark} \rm
The $2 \times 2$ case of 
Definition~\ref{def:lace} corresponds to the 
``interpolating squares'' of \cite{Wa00}.
Although it is not clear how these concepts are 
related, the corresponding ``interpolating 
hypercubes'' point towards a concept of fully 
interlacing tensors of higher rank.
\end{remark}

\section{Flips and products}
\label{sec:flips}

This section and the following one include the main 
results of this paper. 

Given a $p \times q$ matrix $\aA = (A_{ij}(x))$, 
we denote by $\aA^\bot$ the $q \times p$ matrix 
obtained by reflecting $\aA$ across its reverse 
diagonal. With indices $0 \le i < p$ and $0 \le j 
< q$, this is the matrix $\aA^\bot = (A^\bot_{ij}
(x))$ defined by setting $A^\bot_{ji}(x) = 
(A_{p-1-i,q-1-j}(x))$; we call it the \emph{flip}
of $\aA$. The flip of a $\ZZ \times \ZZ$ matrix 
$\bM = (M_{uv})$ is defined as the 
$\ZZ \times \ZZ$ matrix $\bM^\bot = (M^\bot_{uv})$ 
such that $M^\bot_{vu} = M_{-1-u,-1-v}$ for every 
$(u,v) \in \ZZ \times \ZZ$.

As already discussed, columns of fully interlacing 
matrices are fully interlacing sequences, in the 
sense of Section~\ref{sec:int-seq}, when read from 
top to bottom. The following result implies the
same for rows, when read from \emph{right to left}. 

\begin{theorem} \label{thm:flip} 
Let $\aA$ be a $p \times q$ matrix of formal power 
series.
\begin{itemize}
\item[(a)]
$\Lace(\aA^\bot) = \Lace(\aA)^\bot$.
\item[(b)]
$\aA^\bot$ is fully interlacing if and only if so 
is $\aA$.
\end{itemize}
\end{theorem}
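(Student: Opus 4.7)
The plan is to dispatch (a) by a direct but careful index calculation and then deduce (b) from a general fact about total positivity of $\ZZ\times\ZZ$ matrices.

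For part~(a), the key elementary identity is that if an integer $v$ is written as $v = pv' + i$ with $0 \le i < p$, then the quotient--remainder decomposition of $-1-v$ modulo $p$ is $-1 - v = p(-v' - 1) + (p - 1 - i)$. I would apply this in both coordinates. Given $(u,v) \in \ZZ \times \ZZ$, write $u = qu' + j$ with $0 \le j < q$ and $v = pv' + i$ with $0 \le i < p$ (the roles of $p$ and $q$ swap because $\aA^\bot$ is $q \times p$). By definition, the $(u,v)$ entry of $\Lace(\aA^\bot)$ equals $a^\bot_{ji}(v' - u') = a_{p-1-i,\,q-1-j}(v' - u')$. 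On the other hand, the $(u,v)$ entry of $\Lace(\aA)^\bot$ equals the $(-1-v,\,-1-u)$ entry of $\Lace(\aA)$; the identity above shows that $-1-v$ has quotient $-v'-1$ and remainder $p-1-i$ modulo $p$, while $-1-u$ has quotient $-u'-1$ and remainder $q-1-j$ modulo $q$, so this entry equals $a_{p-1-i,\,q-1-j}\bigl((-u'-1)-(-v'-1)\bigr) = a_{p-1-i,\,q-1-j}(v'-u')$, which matches.

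For part~(b), granted (a), it suffices to verify that the flip of a $\ZZ \times \ZZ$ matrix $\bM$ is TP if and only if $\bM$ itself is. For any row set $u_1 < \cdots < u_k$ and column set $v_1 < \cdots < v_k$ of $\bM^\bot$, the associated $k \times k$ submatrix has $(a,b)$ entry $M_{-1-v_b,\,-1-u_a}$. Setting $I' = \{-1-v_k < \cdots < -1-v_1\}$ and $J' = \{-1-u_k < \cdots < -1-u_1\}$ and extracting the submatrix of $\bM$ on rows $I'$ and columns $J'$ gives a matrix obtained from the original by a transposition followed by reversal of both row and column orderings. Transposition preserves the determinant, and the two reversals contribute signs $(-1)^{\binom{k}{2}}$ each, which cancel. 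Thus every finite square minor of $\bM^\bot$ equals the corresponding minor of $\bM$, so $\bM^\bot$ is TP if and only if $\bM$ is, and (b) follows from (a) and Definition~\ref{def:lace}.

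The only real obstacle is keeping the modular--arithmetic bookkeeping in part~(a) straight; the genuine content is the identity $-1-v = p(-v'-1) + (p-1-i)$, which correctly pairs the ``top-right'' corner of one construction with the ``bottom-left'' corner of the other. Everything else in the argument is formal.
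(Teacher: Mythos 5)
Your proof is correct and follows essentially the same route as the paper: part~(a) by the same quotient--remainder computation based on $-1-v = p(-1-v') + (p-1-i)$, and part~(b) from the invariance of finite minors under the anti-diagonal flip, which the paper states as $\det(L^\bot)=\det(L)$ and you simply justify in more detail via the transpose-plus-double-reversal sign cancellation.
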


\begin{proof}
We convert between indices with the following 
notation: $u, v \in \ZZ$ and
\begin{align*}
u=pu'+i \ \ & \text{with} \ \ 0 \le i<p \ 
              \text{and} \ u'\in\ZZ, \\
v=qv'+j \ \ & \text{with} \ \ 0 \le j<q \ 
              \text{and} \ v'\in\ZZ.
\end{align*}
Note that
\begin{align*}
-1-u &= p(-1-u')+(p-1-i)\ \ \text{and} \ \\
-1-v &= q(-1-v') +(q-1-j),
\end{align*}
so that $(-1-u)'=-1-u'$ and $(-1-v)'=-1-v'$ (modulo 
$p$ and $q$, respectively).

Let $\aA = (A_{ij}(x))$, so that $\aA^\bot = 
(A^\bot_{ji}(x)) = (A_{p-1-i,q-1-j}(x))$. For $u, v 
\in \ZZ$ the $(v, u)$-entry of $\Lace(\aA)^\bot$ 
equals
\begin{align*}
(\Lace(\aA)^\bot)_{vu} &=
\Lace(\aA)_{-1-u,-1-v} = [x^{(-1-v)'-(-1-u)'}]
A_{p-1-i,q-1-j}(x) \\ &=
[x^{u'-v'}] \aA^\bot_{ji}(x) = (\Lace(\aA^\bot))_{vu}.
\end{align*}
This proves part (a). Part (b) follows from part (a) 
and the fact that $\det(L^\bot) = \det(L)$ for every 
finite square matrix $L$.
\end{proof}

The following example shows that the notion of fully
interlacing matrix does not behave as well with 
respect to transposing the matrix. 
\begin{example} \rm 
For positive real numbers $a, b, c, d$ consider 
the interlacing matrix of linear forms
\[ \Lace(\aA) = \Lace \begin{pmatrix} 
   a+x & c+x \\ b+x & d+x \end{pmatrix} 
   = \begin{pmatrix} 
\cdot & \cdot & \cdot & \cdot & \cdot & \cdot & 
        \cdot \\
\cdot &  a  &  c  &  1  &  1  &  0  & \cdot \\
\cdot &  b  &  d  &  1  &  1  &  0  & \cdot \\
\cdot &  0  &  0  &  a  &  c  &  1  & \cdot \\
\cdot &  0  &  0  &  b  &  d  &  	1  & \cdot \\
\cdot &  0  &  0  &  0  &  0  &  a  & \cdot \\
\cdot & \cdot & \cdot & \cdot & \cdot & \cdot &  
        \cdot \end{pmatrix} \]
For this to be TP, its $2 \times 2$ minors must be 
nonnegative, so $b \le a \le c$, $b \le d \le c$ 
and $bc \le ad$. The only way for both $\aA$ and 
$\aA^\top$ to be interlacing is that $a=b=c=d$.
\end{example}

\begin{theorem} \label{thm:grid}
Let $\aA$ and $\bB$ be $p \times t$ and $t \times q$ 
matrices of formal power series, respectively.
\begin{itemize}
\item[(a)]
$\Lace(\aA\bB) = \Lace(\aA) \Lace(\bB)$.

\item[(b)]
If $\aA$ and $\bB$ are fully interlacing, then 
so is $\aA\bB$.
\end{itemize}
\end{theorem}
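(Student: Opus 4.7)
The plan is to derive part (b) from part (a) via the well-known principle that total positivity is preserved under matrix multiplication, so the bulk of the work is establishing the identity in (a). For part (a) I would argue by a direct index computation. Fix $u, v \in \ZZ$ and write them as $u = pu'+i$, $v = qv'+j$ with $0 \le i < p$ and $0 \le j < q$. An intermediate column/row index of the two factors takes the form $w = tw' + k$ with $0 \le k < t$ and $w' \in \ZZ$, and Definition~\ref{def:lace} yields
\[ (\Lace(\aA)\,\Lace(\bB))_{uv} \ = \ \sum_{w \in \ZZ} \Lace(\aA)_{uw}\,\Lace(\bB)_{wv} \ = \ \sum_{k=0}^{t-1} \sum_{w' \in \ZZ} a_{ik}(w'-u')\,b_{kj}(v'-w'). \]
The substitution $m = w' - u'$, $\ell = v' - w'$ turns the inner sum into the coefficient of $x^{v'-u'}$ in $\sum_{k} A_{ik}(x)B_{kj}(x) = (\aA\bB)_{ij}(x)$, which by Definition~\ref{def:lace} is exactly $\Lace(\aA\bB)_{uv}$. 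Note that $a_{ik}(n) = b_{kj}(n) = 0$ for $n < 0$, so only finitely many pairs $(k,w')$ contribute, and the matrix product on the left-hand side is well defined.

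For part (b) I would invoke Cauchy--Binet on finite submatrices. Given finite subsets $I, J \subset \ZZ$ of equal size $r$, the entries $\Lace(\aA)_{uw}$ vanish unless $w' \ge u'$, and the entries $\Lace(\bB)_{wv}$ vanish unless $v' \ge w'$. Consequently, for any $u \in I$ and $v \in J$ the intermediate index $w'$ contributing a nonzero term must lie in the bounded interval $[\min_{u \in I} u',\ \max_{v \in J} v']$. Choosing a finite window $W \subset \ZZ$ of rows/columns that contains all these possibilities, the determinant
\[ \det\bigl(\Lace(\aA\bB)[I; J]\bigr) \ = \ \det\bigl(\Lace(\aA)[I; W]\cdot \Lace(\bB)[W; J]\bigr) \ = \ \sum_{K} \det\bigl(\Lace(\aA)[I; K]\bigr)\,\det\bigl(\Lace(\bB)[K; J]\bigr) \]
(summed over $r$-subsets $K \subseteq W$) is a sum of products of minors of the two TP matrices $\Lace(\aA)$ and $\Lace(\bB)$, hence is nonnegative. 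Since $I, J$ were arbitrary, $\Lace(\aA\bB)$ is TP, as desired.

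The computations are essentially routine; the only real care needed is in handling the $\ZZ \times \ZZ$ indexing. The subtlety — and the step I expect to require the most vigilance — is justifying Cauchy--Binet for infinite matrices, which is precisely where the banded, block upper-triangular structure of $\Lace(\aA)$ and $\Lace(\bB)$ (recorded in the remark after Definition~\ref{def:lace}) is used to reduce everything to a finite matrix identity. Beyond that, one simply has to keep the two-tier indexing $u = pu'+i$, $w = tw'+k$, $v = qv'+j$ straight.
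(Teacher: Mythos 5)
Your proposal is correct and follows essentially the same route as the paper: part (a) by the same two-tier index computation (written in the reverse direction, collapsing the product rather than expanding $\Lace(\aA\bB)$), and part (b) from (a) together with closure of total positivity under products. The only difference is that you spell out, via Cauchy--Binet on a finite window justified by the vanishing of entries with $w' < u'$ or $w' > v'$, the standard fact that products of TP matrices are TP, which the paper simply cites.
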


\begin{proof}
We let $\Lace(\aA) = \bM = (M_{uw})$, $\Lace(\bB)
= \bN = (N_{wv})$, $\aA = (A_{ik}(x))$, $\bB = 
(B_{kj}(x))$ and $\cC = \aA \bB = (C_{ij}(x))$. We 
convert between indices with the following notation:
$u, v, w \in \ZZ$ and
\begin{align*}
u=pu'+i \ \ & \text{with} \ \ 0 \le i < p \ 
              \text{and} \ u' \in \ZZ, \\
v=qv'+j \ \ & \text{with} \ \ 0 \le j < q \ 
              \text{and} \ v' \in \ZZ, \\
w=tw'+k \ \ & \text{with} \ \ 0 \le k < t \ 
              \text{and} \ w' \in \ZZ.
\end{align*}
Then, for $u, v \in \ZZ$, the $(u, v)$-entry of 
$\Lace(\aA\bB)$ equals
\begin{align*}
\Lace(\aA\bB)_{uv}
& = [x^{v'-u'}]C_{ij}(x) = 
    [x^{v'-u'}] \sum_{0 \le k<t} A_{ik}(x) B_{kj}(x) 
\\ &= \sum_{0 \le k<t} \sum_{w' \in \ZZ} 
      \left([x^{w'-u'}] A_{ik}(x) \right) 
			\left([x^{v'-w'}] B_{kj}(x) \right) \\
   &= \sum_{w \in \ZZ} M_{uw} N_{wv} = (\bM\bN)_{uv}.
\end{align*}
This proves part (a). Part (b) follows from part (a) 
and the fact that products of TP matrices are TP.
\end{proof}

\begin{example} \label{ex:grid} \rm
(a) The special case $t = q = 1$ of 
Theorem~\ref{thm:grid} asserts that if $A(x) \in 
\RR[[x]]$ is AESW and $\aA$ is a fully interlacing 
sequence of formal power series in $\RR[[x]]$, then 
so is the sequence obtained from $\aA$ by 
multiplying every entry with $A(x)$.

(b) The special case $p = q = 1$ of 
Theorem~\ref{thm:grid} asserts that if 
\begin{align*}
(A_0(x) \, A_1(x) \, \cdots \, A_{t-1}(x))^\top & 
\\ (B_0(x) \, B_1(x) \, \cdots \, B_{t-1}(x))^\top 
\end{align*}
are fully interlacing sequences of power series in 
$\RR[[x]]$, then 
\[ \sum_{i=0}^{t-1} A_{t-1-i}(x) B_i(x) = 
   A_0(x) B_{t-1}(x) + A_1(x) B_{t-2}(x) + \cdots + 
	 A_{t-1}(x) B_0(x) \] 
is AESW. This should be compared with 
\cite[Lemma~7.8.3]{Bra15}, which states that in the 
case that the $A_i(x)$ and $B_i(x)$ are polynomials
with nonnegative coefficients,
it suffices to assume that the two sequences are 
pairwise interlacing. The special case in which the 
$B_i(x)$ are constants is equivalent to the last 
statement in Proposition~\ref{lma-AESWcone}.

(c) Suppose that the $2 \times q$ matrix 
\[ \begin{pmatrix}
   A_{q-1}(x) & \cdots & A_1(x) & A_0(x)  \\  
   B_{q-1}(x) & \cdots & B_1(x) & B_0(x)
\end{pmatrix} \]
of formal power series in $\RR[[x]]$ is fully 
interlacing. According to Theorem~\ref{thm:grid}, 
multiplying on the right by a column vector of 
nonnegative real numbers yields a $2 \times 1$ 
fully interlacing matrix. This means that 
\[ \sum_{i=0}^{q-1} \lambda_i A_i(x) \prec 
	 \sum_{i=0}^{q-1} \lambda_i B_i(x) \]
for all nonnegative real numbers $\lambda_0, 
\lambda_1,\dots,\lambda_{q-1}$. This result 
should be compared with \cite[Lemma~3.14]{Fi06}, 
where the rows and columns of a $2 \times n$ 
matrix of polynomials are assumed to be pairwise 
interlacing and, under additional assumptions, 
the same conclusion as above is reached. More
generally, by the same argument,
\[ \sum_{i=0}^{q-1} C_i(x) A_{q-1-i}(x) \prec 
	 \sum_{i=0}^{q-1} C_i(x) B_{q-1-i}(x) \]
for every fully interlacing sequence $(C_0(x) 
\, C_1(x) \, \cdots \, C_{q-1}(x))^\top$ of 
formal power series in $\RR[[x]]$.

(d) By Example~\ref{ex:def-lace} and 
Theorem~\ref{thm:grid}, the fully interlacing 
property of a matrix $\aA$ of power series is 
preserved under multiplication with totally 
positive matrices (for instance, with Toeplitz 
matrices of AESW power series) of appropriate 
size. In the special case that $\aA$ is a 
column vector with polynomial entries, 
\cite[Theorem~3.7]{Fi06} states that full 
interlacing can be replaced by pairwise 
interlacing.

For example, multiplying a fully interlacing 
column matrix $(A_0(x) \, A_1(x) \, \cdots \, 
A_{p-1}(x))^\top$ on the left with the TP matrix 
\[ \begin{pmatrix} 
   1 & 0 & \cdots & 0 \\ 
	 \lambda_0 & \lambda_1 & \cdots & \lambda_{p-1} 
	 \\ 0 & 0 & \cdots & 1 \end{pmatrix} \]
shows that the triple $(A_0(x), \sum_{i=0}^{p-1} 
\lambda_i A_i(x), A_{p-1}(x))$ is also fully 
interlacing and proves 
Proposition~\ref{lma-AESWcone}. Multiplying a 
fully interlacing matrix $\aA$ of power series 
on the left with a row vector (respectively, on 
the right by a column vector) with all entries 
equal to one shows that the column sums of $\aA$ 
read from right to left (respectively, the row 
sums of $\aA$ read from top to bottom) are fully 
interlacing sequences. Similarly, multiplying 
$\aA$ on the left (respectively, on the right) 
with an upper-triangular square matrix of 
appropriate size with all entries on or above 
the main diagonal equal to one yields a fully 
interlacing matrix whose rows (respectively, 
columns) are the partial sums of the rows 
(respectively, columns) of $\aA$ read from 
bottom to top (respectively, from left to 
right). 
\qed
\end{example}

\begin{remark} \rm
According to Theorem~\ref{thm:grid}, 
multiplication on the left by a $p \times q$ 
fully interlacing matrix of formal power series 
in $\RR[[x]]$ preserves the fully interlacing 
property of $q \times 1$ column-matrices (sequences 
of length $q$) of formal power series. We do not 
know if the converse holds, namely whether every 
$p \times q$ matrix of formal power series which 
preserves the fully interlacing property of 
sequences of length $q$ must be fully 
interlacing. 
\end{remark}

\section{Veronese sections}
\label{sec:veronese}

We recall that $\Ss_k = \Ss_k^{(r)}$ stands for 
the $k$th Veronese $r$-section operator on 
formal power series. 
Given a $p \times q$ matrix $\aA = (A_{ij}(x))$ 
of formal power series and a positive integer $r$, 
we denote by $\Ss_k^{(r)} \aA$ the $p \times q$ 
matrix $( \Ss_k^{(r)} A_{ij}(x))$ which is 
obtained by applying $\Ss_k^{(r)}$ to every 
entry of $\aA$ and set
\[ \sS^{(r)} \aA = (\Ss_0^{(r)} \aA \ \, 
   \Ss_1^{(r)} \aA \ \cdots \ 
	 \Ss_{r-1}^{(r)} \aA) \ \ \ \ \text{and} 
	 \ \ \ \  
   \sS^{(r) \bot} \aA = \begin{pmatrix} 
   \Ss_{r-1}^{(r)} \aA \\ \vdots \\ 
	 \Ss_1^{(r)} \aA \\ \Ss_0^{(r)} \aA 
	 \end{pmatrix}. \]

\begin{example} \rm
We have
\begin{align*} 
\sS^{(r)} A(x) &= (\Ss_0 A(x) \ \Ss_1 A(x) \ 
                   \cdots \ \Ss_{r-1} A(x)), \\ 
\sS^{(r) \bot} A(x) &= (\Ss_0 A(x) \ \Ss_1 A(x) 
                   \ \cdots \ \Ss_{r-1} A(x))^\bot 
\end{align*}
for every $A(x) \in \RR[[x]]$ and
\begin{align*}
\sS^{(2)} (A(x) \ B(x)) &= (\Ss_0 A(x) \ \Ss_0 
            B(x) \ \Ss_1 A(x) \ \Ss_1 B(x)) \\
\sS^{(2) \bot} (A(x) \ B(x)) & = \begin{pmatrix} 
   \Ss_1 A(x) & \Ss_1 B(x) \\ \Ss_0 A(x) & \Ss_0 
	 B(x) \end{pmatrix} \end{align*}
for all $A(x), B(x) \in \RR[[x]]$. 
\end{example}

\begin{remark} \label{rem:veronese} \rm
Given positive integers $r$ and $s$, let us 
use the shorthand $\Rr_i = \Ss_i^{(r)}$ and 
$\Ss_j = \Ss_j^{(s)}$.

(a) We have $\Rr_i \Ss_j A(x) = \Ss_{j+si}^{(rs)} 
A(x)$ for every $A(x) \in \RR[[x]]$ and all 
$0 \le i < r$, $0 \le j < s$. Indeed, if $A(x) =
\sum_{n \ge 0} a_n x^n$, then 
\begin{align*}
\Rr_i \Ss_j A(x) &= 
\Rr_i \Ss_j \sum_{n \ge 0} a_n x^n = 
\Rr_i \sum_{n \ge 0} a_{j + sn} x^n = 
\sum_{n \ge 0} a_{j + s(i + rn)} x^n \\ &= 
\sum_{n \ge 0} a_{j + si + rsn} x^n = 
\Ss_{j+si}^{(rs)} \sum_{n \ge 0} a_n x^n =
\Ss_{j+si}^{(rs)} A(x).
\end{align*}

(b) We have 
\begin{align*} 
   \sS^{(r) \bot} \sS^{(s)} \aA &= \begin{pmatrix}
   \Rr_{r-1} \Ss_0 \aA & \Rr_{r-1} \Ss_1 \aA & 
	 \cdots & \Rr_{r-1} \Ss_{s-1} \aA \\ 
   \vdots & \vdots & \cdots & \vdots \\
		\Rr_1 \Ss_0 \aA & \Rr_1 \Ss_1 \aA & \cdots &
		\Rr_1 \Ss_{s-1} \aA \\
		\Rr_0 \Ss_0 \aA & \Rr_0 \Ss_1 \aA & \cdots &
		\Rr_0 \Ss_{s-1} \aA \end{pmatrix}, \\ & \\
		\sS^{(s)} \sS^{(r) \bot} \aA &= \begin{pmatrix}
   \Ss_0 \Rr_{r-1} \aA & \Ss_1 \Rr_{r-1} \aA & 
	 \cdots & \Ss_{s-1} \Rr_{r-1} \aA \\ 
   \vdots & \vdots & \cdots & \vdots \\
		\Ss_0 \Rr_1 \aA & \Ss_1 \Rr_1 \aA & \cdots &
		\Ss_{s-1} \Rr_1 \aA \\
		\Ss_0 \Rr_0 \aA & \Ss_1 \Rr_0 \aA & \cdots &
		\Ss_{s-1} \Rr_0 \aA \end{pmatrix},
\end{align*}
where $\Rr_i \Ss_j = \Ss_{j+si}^{(rs)}$ and $\Ss_j 
\Rr_i = \Ss_{i+rj}^{(rs)}$. Moreover, 
\[ \sS^{(r)} \sS^{(s)} \aA = \sS^{(rs)} \aA = 
   (\Ss_0^{(rs)} \aA \ \, \Ss_1^{(rs)} \aA \ \cdots 
	  \ \Ss_{rs-1}^{(rs)} \aA) \]
is the row vector obtained from the first 
matrix by reading each row from left to right, 
starting from the bottom row and continuing to 
the top, or from the second matrix by reading each
column from bottom to top, staring from the leftmost
column and continuing to the right. In particular, 
the operators $\sS^{(r)}$ and $\sS^{(s)}$ commute.
\end{remark}

\begin{theorem} \label{thm:veronese}
Let $\aA = (A_{ij}(x))$ be a $p \times q$ matrix 
of formal power series. If $\aA$ is fully 
interlacing, then so are the matrices 
$\sS^{(r)} \aA$ and $\sS^{(r) \bot} \aA$ for every
positive integer $r$. 
\end{theorem}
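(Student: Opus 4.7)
The plan is to exhibit $\Lace(\sS^{(r)} \aA)$ as a row-submatrix of $\Lace(\aA)$ (with rows kept in their natural order), from which total positivity is inherited for free; the $\sS^{(r) \bot}$ statement then follows by combining this with the identity $\sS^{(r) \bot} \aA = (\sS^{(r)} \aA^\bot)^\bot$ and Theorem~\ref{thm:flip}.

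I would start by computing $\Lace(\sS^{(r)} \aA)_{u, w}$ directly from the definitions. Write $u = pu' + i$ with $0 \le i < p$, and, using that $\sS^{(r)} \aA$ is a $p \times rq$ matrix, decompose the column index as $w = (rq) w' + kq + j$ with $0 \le k < r$ and $0 \le j < q$, so that the entry of $\sS^{(r)} \aA$ in row $i$ and column $kq + j$ is $\Ss_k^{(r)} A_{ij}(x)$. Then
\[ \Lace(\sS^{(r)} \aA)_{u, w} = [x^{w' - u'}] \Ss_k^{(r)} A_{ij}(x) = a_{ij}\bigl(k + r(w' - u')\bigr). \]
The key observation is that the same integer $w$ also decomposes, in the $\aA$-indexing (blocks of size $q$), as $w = q(rw' + k) + j$ with remainder $j < q$ and quotient $rw' + k$. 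Setting $\phi(pu' + i) := pru' + i$, a direct comparison gives $\Lace(\aA)_{\phi(u), w} = a_{ij}(rw' + k - ru') = \Lace(\sS^{(r)} \aA)_{u, w}$. A short case split on whether the $p$-quotients of two row indices agree shows that $\phi : \ZZ \to \ZZ$ is strictly increasing; hence every finite square submatrix of $\Lace(\sS^{(r)} \aA)$ coincides with a submatrix of $\Lace(\aA)$ with rows and columns in increasing order. By the TP hypothesis on $\Lace(\aA)$ all such minors are nonnegative, so $\sS^{(r)} \aA$ is fully interlacing.

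For $\sS^{(r) \bot} \aA$ I would first verify the identity $\sS^{(r) \bot} \aA = (\sS^{(r)} \aA^\bot)^\bot$ by unfolding both sides and checking that row $(r - 1 - k)p + i$ of each matrix is the $i$-th row of $\Ss_k^{(r)} \aA$. Theorem~\ref{thm:flip} then gives $\aA^\bot$ fully interlacing, the preceding argument applied to $\aA^\bot$ gives $\sS^{(r)} \aA^\bot$ fully interlacing, and a second application of Theorem~\ref{thm:flip} finishes the proof. The main obstacle in the whole argument is purely index bookkeeping: the two matrices $\Lace(\sS^{(r)} \aA)$ and $\Lace(\aA)$ partition columns into blocks of different sizes ($rq$ versus $q$), and one has to verify that they rebundle with no column permutation needed, so that the identification really is a pure row-subselection.
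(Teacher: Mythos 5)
Your proposal is correct and its core is the same as the paper's argument: both identify $\Lace(\sS^{(r)} \aA)$ entrywise as the submatrix of $\Lace(\aA)$ obtained by keeping the rows congruent to $0,1,\dots,p-1$ modulo $rp$ (your map $\phi(pu'+i)=pru'+i$ is exactly this selection), so total positivity is inherited. The only difference is in the second half: the paper handles $\sS^{(r)\bot}\aA$ by a ``similar argument'' selecting the columns congruent to $0,1,\dots,q-1$ modulo $rq$, whereas you reduce it to the first case via the (correct) identity $\sS^{(r)\bot}\aA = (\sS^{(r)}\aA^\bot)^\bot$ together with Theorem~\ref{thm:flip}; both routes are valid and of comparable length.
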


\begin{proof}
Let us adopt the notation of Equation~(\ref{eq:Aij})
for the $A_{ij}(x)$. We first consider $\sS^{(r)} 
\aA = (\Ss_0^{(r)} \aA \ \, \Ss_1^{(r)} \aA \ \cdots 
\ \Ss_{r-1}^{(r)} \aA)$, which is a $p \times rq$ 
matrix with entries 
\[ \Ss_k^{(r)} A_{ij}(x) = \sum_{n \ge 0} 
               a_{ij}(k+rn) x^n \]
for $0 \le k < r$ and $0 \le i < p$, $0 \le j < q$.
Given $(u, v) \in \ZZ \times \ZZ$, we set 
\begin{align*}
u=pu'+i \ \ & \text{with} \ \ 0 \le i < p \ 
              \text{and} \ u' \in \ZZ, \\
v=rqv'+j' \ \ & \text{with} \ \ 0 \le j' < rq \ 
              \text{and} \ v' \in \ZZ, \\
j'=kq+j \ \ & \text{with} \ \ 0 \le k < r \ 
              \text{and} \ 0 \le j < q.
\end{align*}
Then, the $(u,v)$-entry of $\Lace(\sS^{(r)} \aA)$
is equal to
\begin{align*}
\left( \Lace(\sS^{(r)} \aA) \right)_{uv} &= 
       [x^{v' - u'}] \, (\sS^{(r)} \aA)_{ij'}(x) = 
			 [x^{v' - u'}] \, \Ss_k^{(r)} A_{ij} (x) = 
			 a_{ij} (k + r(v'-u')) \\ &= 
			 [x^{k+rv'-ru'}] \, A_{ij}(x). 
\end{align*}
Since $v=(k+rv')q+j$, we have shown that the 
the $(u,v)$-entry of $\Lace(\sS^{(r)} \aA)$
is equal to the $(ru',v)$-entry of $\Lace(\aA)$
for every $(u, v) \in \ZZ \times \ZZ$. This 
means that $\Lace(\sS^{(r)} \aA)$ is the 
submatrix of $\Lace(\aA)$ consisting of all 
rows with indices congruent to $0, 1,\dots,p-1$ 
modulo $rp$. A similar argument shows that 
$\sS^{(r) \bot} \aA$ is the submatrix of
$\Lace(\aA)$ consisting of all columns with 
indices congruent to $0, 1,\dots,q-1$ modulo 
$rq$. In particular, $\Lace(\sS^{(r)} \aA)$ 
and $\Lace(\sS^{(r) \bot} \aA)$ are TP as 
submatrices of a TP matrix.  
\end{proof}

\begin{remark} \rm
All $r$-sections $\Ss_k^{(r)} A(x)$ of an AESW 
power series $A(x) \in \RR[[x]]$ are AESW as 
well; see \cite[Theorem~7]{AESW}. 
Theorem~\ref{thm:veronese} implies the stronger 
result that $\sS^{(r)} A(x)$ is a fully 
interlacing sequence. The weaker 
result, stating that $\sS^{(r)} A(x)$ is pairwise 
interlacing for every AESW polynomial $A(x) \in
\RR[x]$, appears as \cite[Theorem~7.65]{Fi06}.
\end{remark}

\begin{example} \rm
Let 
\begin{align*}
Q_n(x) &= \Ss^{(2)}_0 (1+x)^n = \sum_{k \ge 0} 
          {n \choose 2k} x^k \\
P_n(x) &= \Ss^{(2)}_1 (1+x)^n = \sum_{k \ge 0} 
          {n \choose 2k+1} x^k
\end{align*}
for $n \in \NN$. Since $(1+x)^n \prec (1+x)^{n+1}$, 
Theorem~\ref{thm:veronese} implies that the 
matrices
\[ \begin{pmatrix} 
   Q_n(x) & P_n(x) \\ Q_{n+1}(x) & P_{n+1}(x) 
	 \end{pmatrix} \ \ \ \text{and} \ \ \ 
	 \begin{pmatrix} 
   P_n(x) \\ P_{n+1}(x) \\ Q_n(x) \\ Q_{n+1}(x) 
	 \end{pmatrix} \]
are fully interlacing for every $n \in \NN$.
\qed
\end{example}

As mentioned in the introduction, the question 
whether part (b) of the following statement is 
valid motivated our investigations.
\begin{corollary} \label{cor:main}
\begin{itemize}
\item[(a)]
If $A(x), B(x) \in \RR[[x]]$ are formal power series 
and $A(x) \prec B(x)$, then the sequence
\[ (\Ss_0^{(r)} B(x) \ \Ss_0^{(r)} A(x) \ \Ss_1^{(r)} 
   B(x) \ \Ss_1^{(r)} A(x) \ \cdots \ \Ss_{r-1}^{(r)} 
	 B(x) \ \Ss_{r-1}^{(r)} A(x))^\bot \]
is fully interlacing.

\item[(b)]
If $P(x), Q(x) \in \RR[x]$ are polynomials and $P(x) 
\prec Q(x)$, then the sequence
\[ (\Ss_0^{(r)} Q(x) \ \Ss_0^{(r)} P(x) \ \Ss_1^{(r)} 
   Q(x) \ \Ss_1^{(r)} P(x) \ \cdots \ \Ss_{r-1}^{(r)} 
	 Q(x) \ \Ss_{r-1}^{(r)} P(x))^\bot \]
is pairwise interlacing. In particular, $\Ss_k^{(r)} 
P(x) \prec \Ss_k^{(r)} Q(x)$ for $0 \le k < r$. 
\end{itemize}
\end{corollary}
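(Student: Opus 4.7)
The plan is to deduce both parts directly from Theorem~\ref{thm:veronese} applied to the $2 \times 1$ column matrix $\aA = \bigl( A(x) \ B(x) \bigr)^{\top}$. The hypothesis $A(x) \prec B(x)$ says exactly that $\aA$ is fully interlacing, by Example~\ref{ex:def-lace}(b), so Theorem~\ref{thm:veronese} immediately tells us that $\sS^{(r) \bot} \aA$ is fully interlacing as well.

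For part (a), I would simply unravel what $\sS^{(r) \bot} \aA$ is as a column of length $2r$. Each operator $\Ss_k^{(r)}$ acts entrywise on $\aA$, producing the $2 \times 1$ block $\bigl( \Ss_k^{(r)} A(x) \ \Ss_k^{(r)} B(x) \bigr)^{\top}$. Stacking these blocks with the highest index on top, as prescribed by $\sS^{(r) \bot}$ in Section~\ref{sec:veronese}, yields the column
\[
\bigl( \Ss_{r-1}^{(r)} A(x) \ \Ss_{r-1}^{(r)} B(x) \ \cdots \ \Ss_0^{(r)} A(x) \ \Ss_0^{(r)} B(x) \bigr)^{\top},
\]
which is precisely the flip of the row displayed in the statement. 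Hence part (a) follows.

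Part (b) then drops out at once: polynomials are a special case of formal power series, so (a) applies with $A(x) = P(x)$ and $B(x) = Q(x)$ and produces a fully interlacing column. By Proposition~\ref{lem:mat-heredity} (Heredity), every submatrix of this column is fully interlacing, and in particular every $2 \times 1$ subsequence picked from two entries $i < j$ is fully interlacing, which by Example~\ref{ex:def-lace}(b) is the same as saying those two entries are in the relation $\prec$. This gives pairwise interlacing of the whole sequence. The final ``in particular'' assertion $\Ss_k^{(r)} P(x) \prec \Ss_k^{(r)} Q(x)$ is the specific instance obtained from the adjacent pair $\bigl( \Ss_k^{(r)} P(x) \ \Ss_k^{(r)} Q(x) \bigr)^{\top}$ within that sequence.

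There is no genuine mathematical obstacle here once Theorem~\ref{thm:veronese} is in hand: the entire corollary is a direct packaging of that result, together with Example~\ref{ex:def-lace}(b) to translate between ``fully interlacing $2 \times 1$ column'' and the relation $\prec$. The only care required is the index-bookkeeping in part (a) that identifies $\sS^{(r) \bot} \bigl( A(x) \ B(x) \bigr)^{\top}$ with the specific ordering written out in the statement, which is purely a matter of matching definitions.
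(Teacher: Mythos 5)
Your proposal is correct and is essentially the paper's own argument: the paper also proves both parts by applying Theorem~\ref{thm:veronese} to $\aA = (A(x)\ B(x))^\top$ (respectively $(P(x)\ Q(x))^\top$) and reading off $\sS^{(r)\bot}\aA$, with part (b) following since fully interlacing sequences are pairwise interlacing by heredity. Your extra bookkeeping identifying $\sS^{(r)\bot}\aA$ with the flipped row in the statement, and the extraction of $\Ss_k^{(r)}P(x) \prec \Ss_k^{(r)}Q(x)$ from adjacent entries, just makes explicit what the paper leaves implicit.
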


\begin{proof}
This follows by applying Theorem~\ref{thm:veronese}
to $\sS^{(r) \bot} \aA$ for $\aA = (A(x) \ B(x))^\top$
and $\aA = (P(x) \ Q(x))^\top$, respectively.
\end{proof}

Various results can be deduced by combining 
Theorem~\ref{thm:veronese} and the results of 
Section~\ref{sec:flips}; here is an example.
\begin{corollary} 
If $A(x), B(x) \in \RR[[x]]$ are formal power series 
and $A(x) \prec B(x)$, then 
\[ \sum_{k=0}^{r-1} \lambda_k \Ss_k^{(r)} A(x) \prec 
	 \sum_{k=0}^{r-1} \lambda_k \Ss_k^{(r)} B(x) \]
for all nonnegative real numbers $\lambda_0, 
\lambda_1,\dots,\lambda_{r-1}$.
\end{corollary}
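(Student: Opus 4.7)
The plan is to package the statement as the full interlacing of a $2\times 1$ matrix obtained as the product of two fully interlacing matrices, using only Theorems~\ref{thm:grid} and~\ref{thm:veronese} together with Example~\ref{ex:def-lace}.

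First, I would start from the hypothesis: by Example~\ref{ex:def-lace}(b), the assumption $A(x) \prec B(x)$ means that the $2 \times 1$ column $\aA = (A(x) \ B(x))^\top$ is fully interlacing. Applying Theorem~\ref{thm:veronese} then yields that the $2 \times r$ matrix
\[ \sS^{(r)} \aA \ = \ \begin{pmatrix} \Ss_0^{(r)} A(x) & \Ss_1^{(r)} A(x) & \cdots & \Ss_{r-1}^{(r)} A(x) \\ \Ss_0^{(r)} B(x) & \Ss_1^{(r)} B(x) & \cdots & \Ss_{r-1}^{(r)} B(x) \end{pmatrix} \]
is fully interlacing.

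Next, I would view the weights as the $r \times 1$ constant matrix $\bL = (\lambda_0 \ \lambda_1 \ \cdots \ \lambda_{r-1})^\top$. Since every square submatrix of a column matrix is a single entry, and all entries of $\bL$ are nonnegative, $\bL$ is TP, hence fully interlacing by Example~\ref{ex:def-lace}(a). Applying Theorem~\ref{thm:grid}(b) to the pair $(\sS^{(r)} \aA, \bL)$ shows that the product
\[ (\sS^{(r)} \aA)\, \bL \ = \ \begin{pmatrix} \sum_{k=0}^{r-1} \lambda_k \Ss_k^{(r)} A(x) \\[2pt] \sum_{k=0}^{r-1} \lambda_k \Ss_k^{(r)} B(x) \end{pmatrix} \]
is a fully interlacing $2 \times 1$ matrix, which by Example~\ref{ex:def-lace}(b) is exactly the desired interlacing relation.

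There is no real obstacle: the corollary is a direct application of the machinery of Section~\ref{sec:flips} to the Veronese matrix furnished by Theorem~\ref{thm:veronese}. The only minor point requiring attention is the identification of a nonnegative column of constants with a fully interlacing matrix, which is immediate from Example~\ref{ex:def-lace}(a).
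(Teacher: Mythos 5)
Your proof is correct and follows essentially the same route as the paper: the paper applies Theorem~\ref{thm:veronese} to $\aA = (A(x)\ B(x))^\top$ and then invokes Example~\ref{ex:grid}(c), which is precisely your step of multiplying the fully interlacing $2\times r$ matrix $\sS^{(r)}\aA$ on the right by a nonnegative column via Theorem~\ref{thm:grid}(b) and Example~\ref{ex:def-lace}. You merely unpack Example~\ref{ex:grid}(c) explicitly instead of citing it, which is fine.
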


\begin{proof}
This follows by applying Example~\ref{ex:grid} (c)
to the matrix $\sS^{(r)} \aA$ for $\aA = (A(x) \ 
B(x))^\top$, which is fully interlacing by 
Theorem~\ref{thm:veronese}.
\end{proof}

\section{An application}
\label{sec:app}

This section describes a concrete application to
the theory of uniform triangulations of simplicial
complexes~\cite{Ath22, AT21}. We assume familiarity
with basic notions about simplicial complexes,
their triangulations and their face enumeration;  
such background can be found 
in~\cite{Bj95, StaCCA}.

A triangulation $\Delta'$ of an $(n-1)$-dimensional 
simplicial complex $\Delta$ is said to be 
\emph{uniform} \cite{Ath22} if for all $0 \le i \le 
j \le n$ and for every $(j-1)$-dimensional face 
$F \in \Delta$, the number of $(i-1)$-dimensional 
faces of the restriction of $\Delta'$ to $F$ depends 
only on $i$ and $j$. We denote this number by 
$f_{ij}$ and say that the triangular array $\fF = 
(f_{ij})_{0 \le i \le j \le n}$ is the 
\emph{$f$-triangle} (of size $n$) associated to 
$\Delta'$ and that $\Delta'$ is an 
\emph{$\fF$-uniform} triangulation of $\Delta$. 
The $h$-polynomial of a simplicial complex 
$\Delta$, denoted $h(\Delta, x)$, provides a 
convenient way to encode its face numbers 
\cite[Section~II.2]{StaCCA}. One of the main 
results of~\cite{Ath22} (see 
\cite[Theorem~4.1]{Ath22}) states that the
$h$-polynomial of an $\fF$-uniform triangulation 
$\Delta'$ of a simplicial complex $\Delta$ depends 
only on $h(\Delta,x)$ and $\fF$ and describes this 
dependence explicitly. We may thus denote 
$h(\Delta',x)$ by $h_\fF(\Delta,x)$. The following 
two families of polynomials play an important role
in the theory:
\begin{align*}
h_\fF(\sigma_m,x), & \ \ \text{for} \ m \in 
                    \{0, 1,\dots,n\}, \\
\theta_\fF(\sigma_m,x) &= h_\fF(\sigma_m, x) - 
                     h_\fF(\partial \sigma_m, x),
			 \ \ \text{for} \ m \in \{0, 1,\dots,n\},
\end{align*}
where $\sigma_m$ stands for the $(m-1)$-dimensional
simplex and $\partial \sigma_m$ is its boundary 
complex. The polynomials $h_\fF(\sigma_m,x)$ have 
nonnegative coefficients and, under some mild 
assumptions, so do the $\theta_\fF(\sigma_m,x)$.
Following \cite[Section~3]{AT21}, we say that $\fF$ 
has the \emph{strong interlacing property} if 

\begin{itemize}
\itemsep=0pt
\item[{\rm (i)}]
$h_\fF(\sigma_m, x)$ is real-rooted for all $2 \le 
m < n$, 

\item[{\rm (ii)}]
$\theta_\fF(\sigma_m, x)$ is either identically
zero, or a real-rooted polynomial of degree $m-1$ 
with nonnegative coefficients which is interlaced 
by $h_\fF(\sigma_{m-1}, x)$, for all 
$2 \le m \le n$.
\end{itemize}
These conditions imply strong real-rootedness 
properties for the $h$-polynomials of $\fF$-uniform 
triangulations of simplicial complexes and their 
symmetric decompositions \cite[Section~6]{Ath22} 
\cite[Section~4]{AT21}, such as the 
real-rootedness of $h_\fF(\Delta,x)$ for every 
$(n-1)$-dimensional Cohen--Macaulay simplicial 
complex $\Delta$ \cite[Theorem~6.1]{Ath22}. It is
thus natural to ask \cite[Question~7.1]{AT21} which
$f$-triangles of uniform triangulations have the 
strong interlacing property. Such $f$-triangles 
include those of barycentric subdivisions, $r$-fold
edgewise subdivisions (for $r \ge n$) and 
$r$-colored barycentric subdivisions 
\cite[Section~7]{Ath22} \cite[Section~5]{AT21}; 
see \cite[Section~5]{Ath22} \cite[Section~3]{MW17} 
and the references given there for the relevant 
definitions and background. 

Given an $f$-triangle $\fF$ of size $n$, we denote 
by $\esd_r(\fF)$ the $f$-triangle of size $n$ 
associated to the $r$-fold edgewise subdivision of 
an $\fF$-uniform triangulation of $\sigma_n$. The 
main result of this section is as follows.
\begin{proposition} \label{prop:app}
Let $\fF$ be the $f$-triangle associated to a 
uniform triangulation of the simplex $\sigma_n$. If
$\fF$ has the strong interlacing property, then so 
does $\esd_2(\fF)$.
\end{proposition}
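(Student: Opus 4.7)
The plan is to realize both $h_{\esd_2(\fF)}(\sigma_m,x)$ and $\theta_{\esd_2(\fF)}(\sigma_m,x)$ as $0$th Veronese $2$-sections of AESW polynomials built from $h_\fF(\sigma_m,x)$ and $\theta_\fF(\sigma_m,x)$, and then transfer the strong interlacing property from $\fF$ to $\esd_2(\fF)$ using Theorem~\ref{thm:veronese}.

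First, I would invoke the formulas from \cite{Ath22} expressing the $h$-polynomial of an $\esd_2(\fF)$-uniform triangulation of a simplex in terms of the Veronese $2$-section operator $\Ss_0^{(2)}$ applied to products of $h_\fF(\sigma_m,x)$ or $\theta_\fF(\sigma_m,x)$ with suitable AESW factors, such as powers of $1+x$. The goal is expressions of the form $h_{\esd_2(\fF)}(\sigma_m,x) = \Ss_0^{(2)}(H_m(x))$ and $\theta_{\esd_2(\fF)}(\sigma_m,x) = \Ss_0^{(2)}(T_m(x))$, where $H_m(x), T_m(x) \in \RR[x]$ are AESW polynomials obtained from $h_\fF(\sigma_m,x)$ and $\theta_\fF(\sigma_m,x)$ by multiplication with a common power of $1+x$, chosen so that the exponents align between $\sigma_{m-1}$ and $\sigma_m$.

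Second, the strong interlacing hypothesis for $\fF$ gives $h_\fF(\sigma_{m-1},x) \prec \theta_\fF(\sigma_m,x)$, which by Example~\ref{ex:def-lace}(b) is equivalent to the column $(h_\fF(\sigma_{m-1},x) \ \theta_\fF(\sigma_m,x))^\top$ being fully interlacing. Multiplying both entries of this column by a common AESW polynomial preserves the fully interlacing property, by Theorem~\ref{thm:grid}(b) (see also Example~\ref{ex:grid}(a)). This produces a fully interlacing column $(H_{m-1}(x) \ T_m(x))^\top$ whose entries are preimages under $\Ss_0^{(2)}$ of $h_{\esd_2(\fF)}(\sigma_{m-1},x)$ and $\theta_{\esd_2(\fF)}(\sigma_m,x)$.

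Third, applying Theorem~\ref{thm:veronese} to this column shows that $\sS^{(2)\bot}(H_{m-1}(x) \ T_m(x))^\top$ is fully interlacing. Extracting the two entries corresponding to $\Ss_0^{(2)}$ and invoking Proposition~\ref{lem:mat-heredity} gives that $(h_{\esd_2(\fF)}(\sigma_{m-1},x) \ \theta_{\esd_2(\fF)}(\sigma_m,x))^\top$ is fully interlacing, which by Example~\ref{ex:def-lace}(b) is exactly condition~(ii) for $\esd_2(\fF)$. Condition~(i), the real-rootedness of $h_{\esd_2(\fF)}(\sigma_m,x)$, follows by applying Theorem~\ref{thm:veronese} to the $1 \times 1$ matrix $(H_m(x))$, which is fully interlacing because $H_m(x)$ is AESW as a product of AESW polynomials.

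The main obstacle is the first step: pinning down the precise formulas for $H_m(x)$ and $T_m(x)$ from \cite{Ath22} and, crucially, arranging the powers of $1+x$ so that a single preimage column yields both $h_{\esd_2(\fF)}(\sigma_{m-1},x)$ and $\theta_{\esd_2(\fF)}(\sigma_m,x)$ after applying $\Ss_0^{(2)}$ coordinatewise. A secondary check, verifying that $\theta_{\esd_2(\fF)}(\sigma_m,x)$ has degree exactly $m-1$ whenever $\theta_\fF(\sigma_m,x) \neq 0$, is a direct leading-coefficient computation from these formulas.
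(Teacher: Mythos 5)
The gap is exactly the point you flagged as the ``main obstacle,'' and it cannot be repaired in the form you propose. The formula from \cite{Ath22} reads $h_{\esd_2(\fF)}(\Delta,x)=\Ss_0^{(2)}\bigl((1+x)^n h_\fF(\Delta,x)\bigr)$ for an $(n-1)$-dimensional complex $\Delta$, so it applies with exponent $m$ to $\sigma_m$ but with exponent $m-1$ to $\partial\sigma_m$. Because of this mismatch there is no single preimage of the kind you want: instead one finds
\[
\theta_{\esd_2(\fF)}(\sigma_m,x)=\Ss_0^{(2)}\bigl((1+x)^{m-1}\theta_\fF(\sigma_m,x)\bigr)
+x\,\Ss_1^{(2)}\bigl((1+x)^{m-1}h_\fF(\sigma_m,x)\bigr),
\]
and the second summand is genuinely present. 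Indeed, if $\theta_\fF(\sigma_m,x)\equiv 0$, your proposed identity $\theta_{\esd_2(\fF)}(\sigma_m,x)=\Ss_0^{(2)}\bigl((1+x)^{c}\theta_\fF(\sigma_m,x)\bigr)$ would force $\theta_{\esd_2(\fF)}(\sigma_m,x)\equiv 0$, whereas the displayed formula gives a nonzero polynomial of degree $m-1$. Consequently, transferring the single relation $h_\fF(\sigma_{m-1},x)\prec\theta_\fF(\sigma_m,x)$ through $\Ss_0^{(2)}$, which is all that your second and third steps accomplish, only controls the first summand and cannot establish condition (ii) for $\esd_2(\fF)$.

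To close the argument you need the ingredients the paper brings in precisely to handle the cross term: the additional interlacing $h_\fF(\sigma_{m-1},x)\prec h_\fF(\sigma_m,x)$, which is not part of the strong interlacing hypothesis but is supplied by \cite[Theorem~6.1]{Ath22}; the full strength of Corollary~\ref{cor:main}, comparing \emph{different} sections, which yields $\Ss_1^{(2)}\bigl((1+x)^{m-1}h_\fF(\sigma_m,x)\bigr)\prec\Ss_0^{(2)}\bigl((1+x)^{m-1}h_\fF(\sigma_{m-1},x)\bigr)=h_{\esd_2(\fF)}(\sigma_{m-1},x)$; and the elementary facts that $P\prec Q$ implies $Q\prec xP$ and that a polynomial interlacing two AESW polynomials interlaces their sum. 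Your use of Theorem~\ref{thm:grid} and Theorem~\ref{thm:veronese} to obtain $h_{\esd_2(\fF)}(\sigma_{m-1},x)\prec\Ss_0^{(2)}\bigl((1+x)^{m-1}\theta_\fF(\sigma_m,x)\bigr)$, and your treatment of condition (i), are correct as far as they go, but without the $\Ss_1$-versus-$\Ss_0$ comparison and the extra input from \cite{Ath22}, condition (ii) --- both the interlacing statement and the degree-$(m-1)$ statement in the case $\theta_\fF(\sigma_m,x)\equiv 0$ --- remains out of reach.
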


\begin{proof}
We assume that $\fF$ has the strong interlacing 
property and write $\gG := \esd_2(\fF)$. We then 
have (see, for instance, 
\cite[Equation~(9)]{Ath22})
\[ h_\gG (\Delta, x) = \Ss^{(2)}_0 
   \left( (1+x)^n h_\fF(\Delta, x) \right) \]
for every $(n-1)$-dimensional simplicial complex 
$\Delta$.

Since $\Ss^{(r)}_k$ preserves real-rootedness, 
$h_\gG (\sigma_m, x) = \Ss^{(2)}_0 ( (1+x)^n 
h_\fF (\sigma_m, x))$ is real-rooted for all 
$2 \le m < n$. This verifies condition (i) for 
$\gG$. To verify condition (ii), for $2 \le m 
\le n$ we compute that

\begin{align}
\theta_\gG(\sigma_m, x) & = 
h_\gG(\sigma_m, x) - h_\gG(\partial \sigma_m, 
x) \nonumber \\ & = 
\Ss^{(2)}_0 \left( (1+x)^m h_\fF (\sigma_m, x) 
\right) - \Ss^{(2)}_0 \left( (1+x)^{m-1} 
h_\fF(\partial \sigma_m, x) \right) \nonumber 
\\ & = \Ss^{(2)}_0 \left( (1+x)^{m-1} (h_\fF 
(\sigma_m, x) - h_\fF(\partial \sigma_m, x)) 
\right) + \Ss^{(2)}_0 \left( x 
(1 + x)^{m-1} h_\fF(\sigma_m, x) \right) 
\nonumber \\ & = \Ss^{(2)}_0 \left( (1+x)^{m-1} 
\theta_\fF(\sigma_m, x) \right) + x \Ss^{(2)}_1 
\left( (1 + x)^{m-1} h_\fF(\sigma_m, x) \right).
\label{eq:sum}
\end{align}
By assumption, $\theta_\fF(\sigma_m, x)$ either
is identically zero, or has nonnegative coefficients 
and degree $m-1$. In the former case, $h_\fF(\sigma_m, 
x) = h_\fF(\partial\sigma_m, x)$ and hence $h_\fF
(\sigma_m, x)$ also has nonnegative coefficients 
and degree $m-1$. Thus, the formula we have reached 
shows that $\theta_\gG(\sigma_m, x)$ has nonnegative 
coefficients and degree $m-1$ as well.

Finally, $h_\fF(\sigma_{m-1}, x)$ interlaces 
$\theta_\fF(\sigma_m, x)$ and $h_\fF(\sigma_m, x)$ 
by assumption and \cite[Theorem~6.1]{Ath22}. As a 
result, $(1+x)^{m-1} h_\fF(\sigma_{m-1}, x)$ 
interlaces both $(1+x)^{m-1} \theta_\fF(\sigma_m, 
x)$ and $(1+x)^{m-1} h_\fF(\sigma_m, x)$ and 
Corollary~\ref{cor:main} implies that 

\begin{align*}
\Ss^{(2)}_1 \left( (1 + x)^{m-1} h_\fF(\sigma_m, x) 
\right) & \prec \Ss^{(2)}_0 \left( (1+x)^{m-1} 
h_\fF(\sigma_{m-1}, x) \right) \\ & = 
h_\gG(\sigma_{m-1}, x) \\ & \prec \Ss^{(2)}_0 \left( 
(1+x)^{m-1} \theta_\fF(\sigma_m, x) \right). 
\end{align*}
Equivalently, $h_\gG(\sigma_{m-1}, x)$ interlaces 
both summands of~(\ref{eq:sum}). Therefore, $h_\gG
(\sigma_{m-1}, x)$ interlaces their sum 
$\theta_\gG(\sigma_m, x)$ and the proof follows.
\end{proof}

\begin{question} 
Does Proposition~\ref{prop:app} continue to hold 
if $\esd_2(\fF)$ is replaced by $\esd_r(\fF)$ for
any $r \ge 2$?
\end{question}

\end{document}